\newcommand\smallO{
  \mathchoice
    {{\scriptstyle\mathcal{O}}}
    {{\scriptstyle\mathcal{O}}}
    {{\scriptscriptstyle\mathcal{O}}}
    {\scalebox{.7}{$\scriptscriptstyle\mathcal{O}$}}
  }
\newtheorem{prop}{Proposition}
\numberwithin{remark}{section}
\numberwithin{prop}{section}
\numberwithin{thm}{section}
\numberwithin{lemma}{section}
\numberwithin{definition}{section}
\numberwithin{equation}{section}
\title[]{Malicious Experts versus the multiplicative weights algorithm in online prediction}
\author[]{Erhan Bayraktar} 
\address{Department of Mathematics, University of Michigan}
\email{erhan@umich.edu}
\author[]{H. Vincent Poor} 
\address{Department of Electrical Engineering, Princeton University}
\email{poor@princeton.edu}
\author[]{Xin Zhang} 
\address{Department of Mathematics, University of Michigan}
\email{zxmars@umich.edu}
\begin{document}
\maketitle
\begin{abstract}
We consider a prediction problem with two experts and a forecaster. We assume that one of the experts is honest and makes correct prediction with probability $\mu$ at each round. The other one is malicious, who knows true outcomes at each round and makes predictions in order to maximize the loss of the forecaster. Assuming the forecaster adopts the classical multiplicative weights algorithm, we find an upper bound \eqref{eq:upper} for the value function of the malicious expert, and also a lower bound \eqref{eq:lower}. Our results imply that the multiplicative weights algorithm cannot resist the corruption of malicious experts. We also show that an adaptive multiplicative weights algorithm is asymptotically optimal for the forecaster, and hence more resistant to the corruption of malicious experts. 
\end{abstract}

\section{Introduction}
Prediction with expert advice is classical and fundamental in the field of \emph{online learning}, and we refer the reader to \cite{MR2409394} for a nice survey. In this problem, a forecaster makes predictions based on advices of experts so as to minimize his loss, i.e., the cumulative difference between his predictions and true outcomes. A standard performance criterion is the regret: the difference between the loss of the forecaster and the minimum among losses of all experts. The prediction problem is often studied in the so-called adversarial setting and the stochastic setting. In the adversarial setting, the advice of experts is chosen by an adversary so as to maximize the regret of the forecaster, and therefore the problem can be viewed as a zero-sum game between the forecaster and the adversary (see e.g. \cite{MR1265851} \cite{MR3478415} \cite{Drenska2019} \cite{2019arXiv190202368B} \cite{doi:10.1080/03605302.2020.1712418}). In the stochastic setting, the losses of each expert are drawn independent and identically distributed $(i.i.d.)$ over time from a fixed but unknown distribution, and smaller regrets can be achieved compared with the adversarial setting (see e.g. \cite{MR3214784} \cite{10.5555/3157382.3157596} \cite{MR3960937}). 

In this paper, we consider the model in \cite{2020arXiv200100543R} which considers a mix of adversarial and stochastic settings. It is a learning system with two experts and a forecaster. One of the experts is honest, who at each round makes a correct prediction with probability $\mu$. The other one is malicious, who knows the true outcome at each round and makes his predictions so as to maximize the loss of the forecaster. Here we assume that the forecaster adopts the classical multiplicative weights algorithm, and study its resistance to the corruption of the malicious expert. Denote by $V^{\alpha}(N,\rho)$ the expected cumulative loss for the forecaster, where $\alpha$ is the strategy chosen by the malicious expert, $N$ is the fixed time horizon, and $\rho$ is the initial weight of the malicious expert. Instead of regret, we analyze the asymptotic maximal loss  $\lim\limits_{N \to \infty} \max\limits_{\alpha} \frac{V^{\alpha}(N, 1/2)}{N}$.

It was proved in \cite{2020arXiv200100543R} that if the malicious expert is only allowed to adopt offline policies, i.e., to decide whether to tell the true outcome at each round at the beginning of the game, then we have $\lim\limits_{N \to \infty} \max\limits_{\alpha} \frac{V^{\alpha}(N, 1/2)}{N}=1-\mu$. It implies that the extra power of the malicious expert cannot incur extra losses to the forecaster. 

Here we allow the malicious expert to adopt online policies, i.e., at each round, the malicious expert chooses whether to tell the truth based on all the prior histories. To find an upper bound on asymptotic losses, we rescale dynamic programming equations of the problem and obtain a partial differential equation (PDE). Then we prove that the unique solution of this PDE provides us an upper bound
$$\limsup\limits_{N \to \infty} \max\limits_{\alpha} \frac{V^{\alpha}(N, 1/2)}{N} \leq 1-\mu^2.$$
For the lower bound, we design a simple strategy for the malicious expert and prove that 
$$\liminf\limits_{N \to \infty} \max\limits_{\alpha} \frac{V^{\alpha}(N, 1/2)}{N} > 1-\mu,$$
which implies that the malicious expert can incur extra losses to the forecaster when online policies are admissible. To make the forecaster more resistant to the malicious expert, we consider an adaptive multiplicative weights algorithm and prove that it is asymptotically optimal for the forecaster. 

The rest of the paper is organized as follows. In Section~\ref{section:Problem}, we mathematically formulate this problem and develop its dynamic programming equations. In Section~\ref{section:upper}, we show the upper bound of asymptotic losses, and in Section~\ref{section:lower} we find the lower bound. In Section~\ref{section:adaptive}, we consider the malicious expert versus the adaptive multiplicative weights algorithm. In Section~\ref{section:conclusions}, we summarize our results and their implications.

\section{Problem Formulation}\label{section:Problem}
In this section, we introduce the mathematical model as in \cite{2020arXiv200100543R}. Consider a learning system with two experts and a forecaster. For each round $t\in \mathbb{N}_+$, denote the prediction of expert $i \in \{1,2\}$ by $x_t^i \in \{0,1\}$, and the true outcome by $y_t \in \{0,1\}$. 

Suppose that the forecaster adopts the multiplicative weights algorithm. For each round $t \in \mathbb{N}_+$, denote by $p_t^i$ the weight of expert $ i \in \{1,2\}$, $p_t^1+p_t^2=1$. Then the prediction of the forecaster is  $$\hat{y}_t:= \sum_{i=1}^2 p_t^i x_t^i.$$
Given $\epsilon \in (0,1)$, the weights evolve as follows 
\begin{align*}
p_{t+1}^i= \frac{p_t^i \epsilon^{|x^i_t-y_t|}}{p_t^1 \epsilon^{|x^1_t-y_t|}+p_t^2 \epsilon^{|x^2_t-y_t|}}, \quad i=1,2.
\end{align*}

Denote the entire history up to round $t-1$ by $$\mathcal{G}_{t}:=\{p_l^1,p_l^2, x_l^1, x_l^2, y_l: l=1,\dotso t-1\} \cup \{p_t^1,p_t^2 \}.$$
Assume expert $2$ is honest, and at each round $t \in \mathbb{N}_+$ make correct predictions with probability $\mu \in (0,1)$ independently of $\mathcal{G}_t$, i.e., 
\[
x_t^2=
\begin{cases}
y_t   &\text{with probability } \mu, \\
1-y_t &\text{with probability } 1- \mu.
\end{cases}
\]
Expert $1$ is malicious and knows the accuracy $\mu$ of expert $2$ and the outcome $y_t$ at each round. At each stage $t \in \mathbb{N}_+$, based on the information $\mathcal{G}_{t}$, the malicious expert can choose to lie, i.e., make $x_t^1=1-y_t$, or to tell the truth, i.e., make $x_t^1=y_t$. Denote by $\mathcal{A}_{t}$ the space of functions from $\mathcal{G}_{t}$ to $\{T,L\}$, where $T$ (truth) and $L$ (lie) represent $x_t^1=y_t$ and $x_t^1=1-y_t$ respectively. 

At each round $t \in \mathbb{N}_+$, the loss of the forecaster is $l(\hat{y}_t, y_t):=|\hat{y}_t-y_t|$, which is also the gain of the malicious expert. It can be easily verified that 
\begin{equation}\label{eq:loss}
l(\hat{y}_t, y_t)=
\begin{cases}
p_t^1 \quad & \text{if } \alpha_t=L, x_t^2=y_t, \\
1 \quad & \text{if } \alpha_t=L, x_t^2=1-y_t, \\
0 \quad & \text{if } \alpha_t=T, x_t^2=y_t, \\
1-p_t^1 \quad & \text{if } \alpha_t=T, x_t^2=1-y_t.
\end{cases}
\end{equation}
And the evolution of $p_t^1$ is as follows:
\begin{equation}\label{eq:transit}
p_{t+1}^1=
\begin{cases}
g(p_t^1) & \text{if } \alpha_t=L, x_t^2=y_t, \\
g^{(-1)}(p_t^1) & \text{if } \alpha_t=T, x_t^2=1-y_t, \\
p_t^1 & \text{otherwise},
\end{cases}
\end{equation}
where 
$$g(p_t^1)=\frac{1}{1+(1/p_t^1-1)/\epsilon}, \quad g^{(-1)}(p_t^1)=\frac{1}{1+(1/p_t^1-1)\epsilon}.$$ 
For a fixed time horizon $N$, the goal of the malicious expert is to maximize the cumulative loss of the forecaster by choosing a sequence of strategies $\alpha=\{(\alpha_1, \alpha_2, \dotso ): \alpha_t \in \mathcal{A}_t, t \in \mathbb{N}_+ \}$, i.e., solving the optimization problem
\begin{align*}
V(N,\rho):=\max\limits_{\alpha} \mathbb{E}^{\alpha}\left[\sum_{t=1}^N l(\hat{y}_t, y_t)\left| \ p_1^1=\rho \right.\right].
\end{align*}

According to \eqref{eq:loss},
we obtain the expected current loss
\begin{equation}\label{eq:currentloss}
\mathbb{E}^{\alpha_t}\left[l(\hat{y}_t,y_t)| \mathcal{G}_t \right]=
\begin{cases}
(1-\mu+\mu p_t^1) & \text{if } \alpha_t=L, \\
(1-\mu)(1-p_t^1) & \text{if } \alpha_t= T. 
\end{cases}
\end{equation}
In combination with \eqref{eq:transit}, we get dynamic programming equations
\begin{align}\label{eq:dpp}
V(t+1,\rho) =& \max\{(1-\mu+\mu\rho)+\mu V(t, g(\rho))+(1-\mu)V(t,\rho),  \\
& (1-\mu)(1-\rho)+(1-\mu) V\left(t, g^{(-1)}(\rho)\right)+\mu V(t,\rho)       \}, \notag
\end{align}
together with initial conditions $V(0,\rho)=0$.

\section{Upper bound on the Value function}\label{section:upper}
In this section, we properly rescale the \eqref{eq:dpp} and obtain a PDE \eqref{eq:HJBc}. We explicitly solve this equation, and show that its solution \eqref{eq:sol} provides an upper bound on $\limsup\limits_{N \to \infty} \frac{V(N,1/2)}{N}.$ 
\subsection{Limiting PDE}
To appropriately rescale \eqref{eq:dpp} and follow the formulation of \cite{MR3206981}, we change the variable $$x=\frac{\ln(1/\rho-1)}{\ln(1/\epsilon)}, \quad \rho = \frac{1}{1+(1/\epsilon)^x},$$
and define 
\begin{align*}
\tilde{V}(t,x):=-V\left(t, \frac{1}{1+(1/\epsilon)^x}\right).
\end{align*}
Then \eqref{eq:dpp} becomes 
\begin{align}\label{eq:cdpp}
\tilde{V}(t+1,x) =& \min\left\{-\left(1-\mu+\frac{\mu}{1+(1/\epsilon)^x}\right) +\mu \tilde{V}(t, x+1)+(1-\mu)\tilde{V}(t,x),  \right. \\
& \left.-(1-\mu)\left(1-\frac{1}{1+(1/\epsilon)^x}\right)+(1-\mu) \tilde{V}\left(t, x-1\right)+\mu \tilde{V}(t,x)       \right\}. \notag
\end{align}

Define scaled value functions via the equation $\frac{\tilde{V}^{\delta}(\delta t, \delta x)}{\delta}=\tilde{V}(t,x)$. Substituting in \eqref{eq:cdpp}, we obtain that 
\begin{align}\label{eq:sdpp}
\tilde{V}^{\delta}(t+\delta,x)= & \min \left\{   -\delta\left(1-\mu+\frac{\mu}{1+(1/\epsilon)^{x/\delta}} \right)  +\mu \tilde{V}^{\delta}(t, x+\delta) +(1-\mu) \tilde{V}^{\delta}(t,x) \right.  \\
     &  \left.  -\delta(1-\mu)\left(1-\frac{1}{1+(1/\epsilon)^{x/\delta}}\right)+ (1-\mu) \tilde{V}^{\delta}\left(t, x-\delta ) \right)+\mu \tilde{V}^{\delta}(t,x) \right\}. \notag
\end{align} 
Taking $\delta$ to $0$ in \eqref{eq:sdpp}, we obtain a first order PDE 
\begin{align}\label{eq:pde}
0=v_t(t,x)+& \max \left\{   1-\mu+\mu s(x)- \mu v_{x}(t,x), \right. \\
&\left. (1-\mu)(1-s(x))+(1-\mu) v_x(t,x)   \right\}, \notag 
\end{align}
where $v(0,x)=0$, and
\[s(x)=
\begin{cases}
0, \quad \text{if } x>0, \\
1, \quad \text{if } x<0.
\end{cases}
\]
Define $\Omega_1=\{x>0\}, \Omega_2=\{x<0\}, \mathcal{H}=\{x=0\}$, and Hamiltonians
\begin{align*}
H_1(x,p)&= \max \{1-\mu-\mu p, 1-\mu+(1-\mu) p \}, \quad x \in \bar{\Omega}_1, \\
H_2(x,p)&= \max \{1-\mu p, (1-\mu) p \}, \quad x \in \bar{\Omega}_2. \\
\end{align*}
Then \eqref{eq:pde} becomes 
\begin{align}\label{eq:HJB}
v_t+H_i(x,v_x)=0 \quad \text{ for }  x \in \Omega_i,\ i=1,2. 
\end{align}
Following Ishii's definition of viscosity solutions to discontinuous Hamiltonians, we complement \eqref{eq:HJB} by 
\begin{align*}
\min \{v_t+H_1(x,v_x), v_t+H_2(x,v_x) \} \leq 0 \quad \text{ for }  x \in \mathcal{H}, \\
\max \{v_t+H_1(x,v_x), v_t+H_2(x,v_x) \} \geq 0 \quad \text{ for } x \in \mathcal{H}, 
\end{align*}
where $\min$ and $\max$ should be understood in the sense of viscosity solutions. 

Solving \eqref{eq:HJB} by the method of characteristics and assuming that the value function is differentiable with respect to $x$ on $\mathcal{H}$, we  conjecture the solution
\begin{align}\label{eq:sol}
 v(t,x)=
\begin{cases}
-(1-\mu)t, & \text{if } x \in [(1-\mu)t, \infty),   \\
-(1-\mu^2)t+\mu x & \text{if }  x \in [-\mu t , (1-\mu)t], \\
-t, & \text{if } x \in (-\infty, -\mu t]. 
\end{cases}
\end{align}
\begin{prop}
A viscosity solution of 
\begin{align}\label{eq:HJBc}
\begin{cases}
v_t+H_i(x,v_x)=0, \quad \text{ for } x \in \Omega_i, i=1,2, \\
\min \{v_t+H_1(x,v_x), v_t+H_2(x,v_x) \} \leq 0 \quad \text{ for }  x \in \mathcal{H}, \\
\max \{v_t+H_1(x,v_x), v_t+H_2(x,v_x) \} \geq 0 \quad \text{ for } x \in \mathcal{H}, \\
v(0,x)=0.   \tag{HJB}
\end{cases}
\end{align}
is given by \eqref{eq:sol}.
\end{prop}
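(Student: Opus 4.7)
The candidate $v$ from \eqref{eq:sol} is piecewise affine on the three strips $\{x>(1-\mu)t\}$, $\{-\mu t<x<(1-\mu)t\}$, $\{x<-\mu t\}$, separated by the kink rays $x=(1-\mu)t$ and $x=-\mu t$, and it is $C^1$ across the interface $\mathcal{H}=\{x=0\}$ for $t>0$. My plan is to verify \eqref{eq:HJBc} in four stages: the classical PDE inside each open strip, the Ishii pair on $\mathcal{H}$, the viscosity tests on each of the two kink rays, and the trivial initial condition $v(0,x)=0$.

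The interior checks reduce to direct substitutions. In the top strip $(v_t,v_x)=(-(1-\mu),0)$ and $H_1(x,0)=1-\mu$; in the bottom strip $(v_t,v_x)=(-1,0)$ and $H_2(x,0)=1$; and in the middle strip $(v_t,v_x)=(-(1-\mu^2),\mu)$, where a short computation shows that $p=\mu$ makes both Hamiltonians equal $1-\mu^2$. This last identity takes care simultaneously of the PDE on each side of $\mathcal{H}$ and, since $v$ is smooth across $\mathcal{H}$, of both Ishii inequalities there.

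The substantive step is at the two kinks, each of which lies in the interior of a single $\Omega_i$ so the Hamiltonian is continuous there. At $x=(1-\mu)t$ the function $v$ is locally the \emph{minimum} of the two neighbouring affine pieces (left slope $\mu$, right slope $0$), hence has a concave kink: the superdifferential is the segment joining the two one-sided gradients, while the subdifferential is empty (a local affine minorant would have to have slope $\le 0$ on one side and $\ge \mu$ on the other), making the supersolution test vacuous. For the subsolution test I would parametrise the superdifferential by $\varphi_x=p\in[0,\mu]$, which forces $\varphi_t=-(1-\mu)(1+p)$, and use $H_1(x,p)=(1-\mu)(1+p)$ for $p\ge 0$ to get $\varphi_t+H_1=0$. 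At $x=-\mu t$ the situation is mirrored: $v$ is locally the \emph{maximum} of the two pieces (left slope $0$, right slope $\mu$), so the kink is convex, the superdifferential is empty (subsolution vacuous), and I would test the supersolution with $\varphi_x=p\in[0,\mu]$ and $\varphi_t=-1+\mu p$, using $H_2(x,p)=1-\mu p$ for $p\in[0,1]$ to get $\varphi_t+H_2=0$.

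The main obstacle I anticipate is the bookkeeping at the kinks: one must correctly identify the concavity/convexity of each ray, deduce which of the sub/supersolution tests is vacuous and which needs work, and then range $\varphi_x$ over the full one-sided segment of gradients rather than only its two endpoints. Once that setup is in place, the two non-trivial kink inequalities collapse to the same algebraic identity that already drives the PDE in the middle strip, and no comparison principle is needed since the proposition only asks for existence of a viscosity solution.
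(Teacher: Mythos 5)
Your proposal is correct and follows essentially the same route as the paper's proof: direct verification of the PDE in each affine region, then one-sided-derivative analysis along the two kink rays to constrain the test-function gradient and substitute into the relevant Hamiltonian. Your framing of the kinks as concave/convex (so that one of the two viscosity tests is automatically vacuous) is exactly the content of the paper's observation that $v-\phi$ cannot attain a local maximum on $\{x=-\mu t\}$, and your constraint $\varphi_t=-(1-\mu)(1+p)$ along $\{x=(1-\mu)t\}$ is the same identity the paper obtains by varying $\Delta t$ along that ray.
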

\begin{proof}
The initial condition $v(0,x)=0$ is trivially satisfied. 
We show that $v$ is a subsolution. Suppose $\phi: [0,\infty) \times \mathbb{R} \to \mathbb{R}$ is differentiable, and $v-\phi$ achieves a local maximum $0$ at $(t_0,x_0) \in (0, \infty) \times \mathbb{R}$. Since $v$ is differentiable in the domain $O:=\{(t,x): t>0,\ x \not =(1-\mu)t,\ x \not=-\mu t \}$, we have $\phi_t(t_0,x_0)=v_t(t_0,x_0), \phi_x(t_0,x_0)=v_x(t_0,x_0)$ if $(t_0,x_0) \in O$. Then it is can be easily verified that $\phi_t+H_i(x,\phi_x)=0$ at $(t_0,x_0)$, where $i=1$ if $x_0 \geq 0$, and $i=2$ if $x_0 \leq 0$. 

Suppose $(t_0,x_0)$ is on the line $\{(t,x):t>0, \ x=(1-\mu)t\}$. Note that 
\begin{align*}
&\partial_t^- v(t_0,x_0)=-(1-\mu), \  \partial_t^+(t_0,x_0)=-(1-\mu^2), \\&\partial_x^- v(t_0,x_0)=\mu, \ \quad  \partial_x^+v(t_0,x_0)=0. 
\end{align*}
Since $(t_0,x_0)$ is a local maximum of $u-\phi$, we must have $$ (\phi_t(t_0,x_0), \phi_x(t_0,x_0)) \in \{ (r,p): r \in [ -(1-\mu^2),-(1-\mu)], \ p \in [0,\mu] \}.$$
Take $\Delta x =(1-\mu) \Delta t$. As a result of $$v(t_0+\Delta t, x_0+\Delta x)-\phi(t_0+\Delta t, x_0+\Delta x) \leq 0,$$ we obtain that 
\begin{align*}
-(1-\mu)\Delta t - \phi_t \Delta t -\phi_x \Delta x + \smallO({\Delta t}) \leq 0.
\end{align*}
Since we can choose $\Delta t$ to be either positive or negative, it can be easily deduced that $$-(1-\mu)-\phi_t-(1-\mu)\phi_x=0.$$
Substituting into $H_1$, we obtain that 
\begin{align*}
\phi_t(t_0,x_0)+H_1(x_0, \phi_x(t_0,x_0)) & = \phi_t(t_0,x_0)+(1-\mu)
+(1-\mu)\phi_x(t_0,x_0)=0.
\end{align*}

If $(t_0, x_0)$ is on the line $\{(t,x): t>0, \ x=-\mu t \}$, we have sub/super differentials of $v,$
\begin{align*}
&\partial_t^- v(t_0,x_0)=-1, \  \partial_t^+(t_0,x_0)=-(1-\mu^2), \\&\partial_x^- v(t_0,x_0)=0, \ \quad  \partial_x^+v(t_0,x_0)=\mu. 
\end{align*}
Therefore $v-\phi$ cannot achieve a local maximal on the line $\{(t,x): t>0, \ x=-\mu t \}$. Hence we have proved that $v$ is a subsolution of \eqref{eq:HJBc}, and similarly, we can show that  $v$ is a supersolution.
\end{proof}

\subsection{Control problem}
In this subsection, we show that there is a unique viscosity solution of \eqref{eq:HJBc} by applying results from \cite{MR3092359} and \cite{MR3206981}. First, we interpret \eqref{eq:HJBc} as a control problem. 

In the domain $\Omega_i, i=1,2$, we take $A_i=[0,1]$ as the space of controls, and $$b_i(x,\alpha_i)=\alpha_i \mu -(1-\alpha_i)(1-\mu), \quad  \alpha_i \in A_i,$$ as the controlled dynamics. For $x \in \mathcal{H}$, define the space of controls  $A:=A_1 \times A_2 \times [0,1]$, and the dynamics  $$b_{\mathcal{H}}(x, (\alpha_1,\alpha_2,c)):=c b_1(x,\alpha_1)+(1-c)b_2(x,\alpha_2), \quad (\alpha_1,\alpha_2,c) \in A.$$ The running cost in the domain $\Omega_1$ is given by $l_1(x,\alpha_1)=-(1-\mu)$,  in the domain $\Omega_2$ by $l_2(x,\alpha_2)=-\alpha_2$, and in $\mathcal{H}$ by $$l_{\mathcal{H}}(x,(\alpha_1,\alpha_2, c))=c l_1(x, \alpha_1)+(1-c)l_2(x, \alpha_2),$$
where $(\alpha_1,\alpha_2,c) \in A$.

In order to let trajectories stay on the boundary $\mathcal{H}$ for a while, for $x \in \mathcal{H}$, we denote 
\begin{align*}
A_0(x):=\{a=(\alpha_1,\alpha_2,c) \in A: \ b_{\mathcal{H}}(x, (\alpha_1,\alpha_2,c))=0  \}.
\end{align*}
We say a control $a \in A_0(x)$ is regular if $b_1(x,\alpha_1) \leq 0, b_2(x, \alpha_2) \geq 0$, and denote 
\begin{align*}
A_0^{reg}(x):=\{a=(\alpha_1,\alpha_2,c) \in A_0(x): (-1)^i b_i(x, \alpha_i) \geq 0 \}. 
\end{align*}

Define $\mathcal{A}:=L^{\infty}([0,1]; A)$. We say a Lipschitz function $X_x:[0,1] \to \mathbb{R}, X_x(0)=x$, an admissible trajectory if there exists some control process $a(\cdot) \in \mathcal{A}$, such that for a.e. $t \in [0,1]$
\begin{align}\label{eq:traj}
\dot{X}_{x}(t)=&b_1(X_{x}(t), \alpha_1(t)) \mathbbm{1}_{\{X_{x}(t) \in \Omega_1\}}
 +b_2(X_{x}(t), \alpha_2(t)) \mathbbm{1}_{\{X_{x}(t) \in \Omega_2\}}\\
 &+b_{\mathcal{H}}(X_{x}(t), (\alpha_1(t),\alpha_2(t), c(t)) \mathbbm{1}_{\{X_{x}(t) \in \mathcal{H}\}}. \notag
 \end{align}
According to \cite[Theorem 2.1]{MR3206981}, we have $a(t) \in A_0(X_x(t))$ for a.e. $t \in \{s: X_x(s) \in \mathcal{H}\}$. Denote by $\mathcal{T}_x$ the set of admissible controlled trajectories starting from $x$, i.e.,
\begin{align*}
\mathcal{T}_x:= \{(X_x(.), a(.)) \in \text{Lip}([0,1]; \mathbb{R}) \times \mathcal{A} \text{ such that \eqref{eq:traj} is satisfied and } X_x(0)=x      \}.
\end{align*}
Let us also introduce the set of regular trajectories, 
\begin{align*}
\mathcal{T}^{reg}_x:=\{(X_x(.), a(.)) \in \mathcal{T}_x: a(t) \in A_0^{reg}(X_x(t)) \text{ for a.e. } t \in \{s: X_x(s) \in \mathcal{H}\} \}
\end{align*}

For each $x \in \mathbb{R}, t \in [0,1)$, we define two value functions 
\begin{align}
V^-(x,t):=& \inf_{(X_x(.),a(.))\in \mathcal{T}_x }\int_0^t l(X_x(s),a(s) \ ds, \label{eq:Umin}\\
V^+(x,t):=& \inf_{(X_x(.),a(.))\in \mathcal{T}^{reg}_x }\int_0^t l(X_x(s),a(s) \ ds \label{eq:Umax},
\end{align}
where the cost function $l$ is given by
\begin{align*}
l(X_x(s),a(s)):=\sum_{i=1,2}l_i(X_x(s),\alpha_i(s))\mathbbm{1}_{\{X_x(s) \in \Omega_i\}}+ l_{\mathcal{H}}(X_x(s), a(s))\mathbbm{1}_{\{X_x(s) \in \mathcal{H}\}}.
\end{align*}

Note that in $\Omega_i, i=1,2$, the associated Hamiltonian of \eqref{eq:Umin} and \eqref{eq:Umax}
$$ (x, p) \mapsto \sup_{\alpha_i \in A_i} \{ -b_i(x,\alpha_i)p-l_i(x,\alpha_i)\}$$
coincides with $H_i$ in the last subsection. Then according to \cite[Theorem 3.3]{MR3206981}, both $V^-$ and $V^+$ are viscosity solutions of \eqref{eq:HJBc}. We will show that they are actually equal and there is only one viscosity solution of \eqref{eq:HJBc}. 
\begin{prop}\label{prop:viscosity}
$V^-=V^+$ is the unique viscosity solution of \eqref{eq:HJBc}, and $V^-$ is the minimal supersolution of \eqref{eq:HJBc}. 
\end{prop}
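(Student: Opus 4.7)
The plan is to combine the value-function representation results of \cite{MR3206981} with a comparison principle from \cite{MR3092359} to obtain equality of $V^-$ and $V^+$ and uniqueness. Since $\mathcal{T}^{reg}_x \subseteq \mathcal{T}_x$, the inequality $V^- \leq V^+$ is automatic, so the real content is the reverse inequality together with showing that any other viscosity solution of \eqref{eq:HJBc} must agree with these two.

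First, I would invoke \cite[Theorem 3.3]{MR3206981} once more to record not only that $V^-$ and $V^+$ are viscosity solutions of \eqref{eq:HJBc}, but also that in the Barles--Briani--Chasseigne framework $V^-$ is the minimal supersolution and $V^+$ is the maximal subsolution of this discontinuous HJ problem (the latter with the additional regular-trajectory interface restriction). The second step is to verify the structural hypotheses needed for the comparison principle of \cite{MR3092359}: the Hamiltonians $H_1,H_2$ are convex in $p$ (each a maximum of affine functions), the running costs $l_i$ are bounded, the dynamics $b_i(x,\alpha_i) = \alpha_i - (1-\mu)$ are affine in $\alpha_i$ with compact control sets $A_i=[0,1]$, and the costs and dynamics are independent of $x$ (hence trivially Lipschitz). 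The crucial assumption is \emph{normal controllability} on $\mathcal{H}=\{x=0\}$: since $b_i(0,\alpha_i)\in[-(1-\mu),\mu]$ as $\alpha_i$ ranges over $[0,1]$ for both $i=1,2$, there exist controls on each side that strictly push the trajectory into either $\Omega_1$ or $\Omega_2$ from the interface, which is the transversality condition that rules out pathological behavior on $\mathcal{H}$ and enables the comparison principle.

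With these ingredients, the comparison principle applied to the subsolution $V^+$ and the supersolution $V^-$ (both vanishing at $t=0$) yields $V^+\leq V^-$, and combined with the trivial reverse inequality gives $V^-=V^+$. Uniqueness of the viscosity solution then follows by the same principle: any viscosity solution $w$ of \eqref{eq:HJBc} is both a sub- and a supersolution, so $V^-\leq w\leq V^+$, forcing $w=V^-=V^+$. The minimality of $V^-$ among supersolutions is inherited directly from \cite{MR3206981}.

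The hard part will be aligning the interface conditions of \eqref{eq:HJBc}, written here in the Ishii form with $\min$ and $\max$ of the two Hamilton--Jacobi equations on $\mathcal{H}$, with the precise interface/junction conditions assumed in \cite{MR3092359, MR3206981}, which are typically phrased via regular and singular Hamiltonians on the interface. Once this translation is in place, all remaining hypotheses (convexity, boundedness, normal controllability) are immediate from the explicit forms of $b_i$, $l_i$, and $H_i$ computed above, and the conclusion follows as described.
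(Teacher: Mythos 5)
Your outline is on the right track in invoking the Barles--Briani--Chasseigne framework and in spotting that $V^-\leq V^+$ is automatic from $\mathcal{T}^{reg}_x\subseteq\mathcal{T}_x$, but it contains a genuine gap in the step where you claim $V^+\leq V^-$. In this framework, the Ishii notion of viscosity solution for \eqref{eq:HJBc} is in general \emph{not} unique: $V^-$ and $V^+$ are both Ishii viscosity solutions, and they may genuinely differ. The comparison principle of \cite{MR3206981} (Theorems 4.2 and 4.4) holds only for the refined problems $v_t+\mathbb{H}^-(x,v_x)=0$ and $v_t+\mathbb{H}^+(x,v_x)=0$, which impose, beyond the Ishii conditions, the additional tangential condition $v_t+H_T(x)=0$ or $v_t+H_T^{reg}(x)=0$ on $\mathcal{H}$. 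One always has $H_T\geq H_T^{reg}$ (because $A_0^{reg}(x)\subseteq A_0(x)$), and by \cite[Theorem 3.3]{MR3206981}, $V^+$ is only known to be a subsolution of the \emph{weaker} $\mathbb{H}^+$ problem. Applying comparison to conclude $V^+\leq V^-$ requires $V^+$ to be a subsolution of the \emph{stronger} $\mathbb{H}^-$ problem, and that step does not follow from normal controllability, convexity, boundedness, or Lipschitz structure alone.

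The missing ingredient is the explicit computation, specific to this model, that the two tangential Hamiltonians coincide: $H_T(x)=H_T^{reg}(x)=1-\mu^2$, with the maximizer $(\alpha_1,\alpha_2,c)=(0,1,\mu)$ being a regular control. This is the crux of the paper's argument, and without it the sandwich $V^+\leq V^-\leq V^+$ cannot be closed. What you describe as a translation or bookkeeping issue (``aligning the interface conditions'') is actually a substantive verification that must be carried out on $A_0(x)$ versus $A_0^{reg}(x)$; treating it as routine is where the proposal would fail. Once $H_T=H_T^{reg}$ is established, the rest of your plan (the sandwich argument, uniqueness, and minimality of $V^-$ among supersolutions) matches the paper. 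A minor point: the paper attributes the minimal-supersolution/maximal-subsolution characterization to \cite[Theorem 4.4]{MR3206981} rather than to Theorem 3.3.
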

\begin{proof}
The argument is an application of results from \cite{MR3206981}. Define the Hamiltonians on $\mathcal{H}$ via 
\begin{align*}
H_T(x):=&\sup_{A_0(x)} \{ -l_{\mathcal{H}}(x,a) \}, \\
H_T^{reg}(x):=& \sup_{A^{reg}_0(x)} \{-l_{\mathcal{H}}(x,a) \},
\end{align*}
Let us compute $H_T(x)$. Suppose $a=(\alpha_1,\alpha_2,c) \in A_0(x)$. Then it can be easily verified that maximizing $-l_{\mathcal{H}}(x,a)$ over $A_0(x)$ is equivalent to maximizing  
\begin{align}\label{eq:max}
c(1-\mu)+(1-c)\alpha_2,
\end{align}
subject to constraints,
\begin{align}
&c(\alpha_1+\mu-1)+(1-c)(\alpha_2+\mu-1)=0,  \label{eq:constraint1} \\
& c, \alpha_1,\alpha_2  \in [0,1]. \notag
\end{align}

We first fix $\alpha_2$ and suppose $\alpha_2 > (1-\mu)$. Due to the equality $$c(1-\mu)+(1-c)\alpha_2=(1-\mu-\alpha_2)c+\alpha_2,$$ and the fact that the coefficient before $c$ is negative, maximizing \eqref{eq:max} is equivalent to minimizing $c$ under the constraints. It can be easily seen that the minimum $c$ can be obtained if and only if $\alpha_1=0$. Therefore the equation \eqref{eq:constraint1} becomes $1+\alpha_2c= \alpha_2 +\mu$, and hence \eqref{eq:max} is equal to $(1+c)(1-\mu)$. Now fix $\alpha_1=0$. In order to obtain the maximum of $c$, we have to take $\alpha_2=1$. In that case $\alpha_1=0,\alpha_2=1, c=\mu$ and $c(1-\mu)+(1-c)\alpha_2=1-\mu^2$. 

If $\alpha_2 \leq (1-\mu)$, we have $c(1-\mu)+(1-c)\alpha_2 \leq (1-\mu)< 1- \mu^2$. Since $(0,1,\mu)$ is a regular control, we conclude that $$H_T(x)=H_T^{reg}(x)=1-\mu^2.$$

We say a continuous function $v$ is viscosity solution of 
\begin{align}
v_t+\mathbb{H}^-(x, v_x)&=0 \text{ in } (0,1) \times \mathbb{R}, \label{eq:comparison}  \\
\big[ resp., \quad v_t+\mathbb{H}^+(x, v_x)&=0 \text{ in } (0,1) \times \mathbb{R}    \big] \notag
\end{align}
if it satisfies \eqref{eq:HJBc} and 
\begin{align*}
v_t+H_T(x)&=0 \text{ on } [0,1] \times \mathcal{H}, \\
[resp., \quad v_t+H_T^{reg}(x)&=0 \text{ on } [0,1]  \times \mathcal{H}].
\end{align*}
According to \cite[Theorem 3.3]{MR3206981}, $V^+$ is a viscosity subsolution of $v_t+\mathbb{H}^+(x,v_x)=0$, and hence also a viscosity subsolution of \eqref{eq:comparison} since $H_T=H_T^{reg}$ in our case. As a result of \cite[Theorem 4.2, 4.4]{MR3206981}, $V^-$ is the viscosity solution of \eqref{eq:comparison}, and the comparison result holds for \eqref{eq:comparison}. Therefore we conclude that $V^+ \leq V^-$. Then according to their definitions \eqref{eq:Umin} and \eqref{eq:Umax}, they must be equal. 

Finally according to \cite[Theorem 4.4]{MR3206981}, $V^-$ is the minimal supersolution of \eqref{eq:HJBc} and $V^+$ is the maximal subsolution of \eqref{eq:HJBc}. Then if $v$ is a viscosity solution of \eqref{eq:HJBc}, we must have $V^- \leq  v \leq V^+$ and hence $v=V^-=V^+$. 
\end{proof}

\subsection{Upper bound \eqref{eq:upper}}
In this subsection, we show that 
\begin{align*}
\underline{v}(t,x):=\liminf\limits_{(s,y,\delta) \to (t,x,0) } \tilde{V}^{\delta}(s,y)
\end{align*}
is a viscosity supersolution of \eqref{eq:HJBc}. Then according to Proposition~\ref{prop:viscosity}, we obtain that $\underline{v}(t,x) \geq v(t,x)$, and hence
\begin{align*}
\liminf\limits_{N \to \infty} \frac{\tilde{V}(N,Nx)}{N} \geq v(1,x).
\end{align*}
In particular, if we take $x=0$, then the above inequality becomes 
\begin{align}\label{eq:upper}
\limsup\limits_{N \to \infty} \frac{V(N, 1/2)}{N} \leq 1-\mu^2. 
\end{align}
\begin{prop}
\underline{v} is a viscosity supersolution of \eqref{eq:HJBc}. 
\end{prop}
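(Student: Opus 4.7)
The plan is to apply the Barles--Perthame--Souganidis half-relaxed limits method, adapted to the discontinuous Hamiltonian on $\mathcal{H}=\{x=0\}$. The initial condition $\underline{v}(0,\cdot)\ge 0$ is immediate since $\tilde{V}^{\delta}(0,\cdot)\equiv 0$, so I focus on the interior and interface supersolution inequalities. I would fix a $C^1$ test function $\phi$ and a point $(t_0,x_0)\in(0,\infty)\times\mathbb{R}$ at which $\underline{v}-\phi$ attains a strict local minimum equal to $0$. By the definition of the relaxed lower limit I extract $\delta_k\to 0$ and grid points $(t_k,x_k)\in\delta_k\mathbb{N}\times\delta_k\mathbb{Z}$ with $(t_k,x_k)\to(t_0,x_0)$, $\tilde{V}^{\delta_k}(t_k,x_k)\to\phi(t_0,x_0)$, and such that $\tilde{V}^{\delta_k}-\phi$ attains a discrete local minimum at $(t_k,x_k)$; set $\xi_k:=\tilde{V}^{\delta_k}(t_k,x_k)-\phi(t_k,x_k)\to 0$.

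Next I apply the rescaled dynamic programming equation \eqref{eq:sdpp} at $(t_k-\delta_k,x_k)$. Using the local-minimum inequality $\tilde{V}^{\delta_k}\ge\phi+\xi_k$ at the three grid points $(t_k-\delta_k,x_k\pm\delta_k)$ and $(t_k-\delta_k,x_k)$, and observing that the convex-combination weights $(\mu,1-\mu)$ in each branch of the minimum sum to $1$ so that the additive constant $\xi_k$ cancels, I can replace each $\tilde{V}^{\delta_k}$ on the right-hand side by the corresponding value of $\phi$. Taylor-expanding $\phi$ around $(t_k,x_k)$ and dividing through by $\delta_k>0$ yields, with $q_k:=1/(1+(1/\epsilon)^{x_k/\delta_k})\in[0,1]$,
\begin{align*}
\max\Bigl\{\phi_t+(1-\mu)+\mu q_k-\mu\phi_x,\ \phi_t+(1-\mu)(1-q_k)+(1-\mu)\phi_x\Bigr\}\Big|_{(t_k,x_k)}\ge o(1).
\end{align*}
Passing to a subsequence along which $q_k\to q\in[0,1]$ handles the interior cases: if $x_0>0$ then $x_k/\delta_k\to+\infty$ forces $q=0$ and the left-hand side is exactly $\phi_t+H_1(x_0,\phi_x)$, while if $x_0<0$ then $q=1$ and it becomes $\phi_t+H_2(x_0,\phi_x)$, establishing the supersolution inequality on $\Omega_1$ and $\Omega_2$ respectively.

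The main obstacle is the interface case $x_0=0$, because $q_k$ depends on the ratio $x_k/\delta_k$ rather than on $x_k$ alone, so any $q\in[0,1]$ is an admissible cluster point and $s(x_0)$ is not determined in the limit. The idea I would exploit is the pair of elementary term-wise bounds
\begin{align*}
(1-\mu)+\mu q-\mu p &\le 1-\mu p \le H_2(0,p),\\
(1-\mu)(1-q)+(1-\mu)p &\le (1-\mu)(1+p) \le H_1(0,p),
\end{align*}
valid for every $q\in[0,1]$ and every $p\in\mathbb{R}$. These dominate the limiting inequality by $\max\{\phi_t+H_1(0,\phi_x),\phi_t+H_2(0,\phi_x)\}\ge 0$, which is precisely the Ishii supersolution condition on $\mathcal{H}$. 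Thus the convex-combination structure of the rescaled DPP is automatically compatible with the one-sided Hamiltonians $H_1$ and $H_2$ at the interface, so no finer analysis of tangential dynamics or regular controls is needed at this supersolution stage.
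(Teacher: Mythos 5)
Your proof is correct and follows essentially the same route as the paper: a Barles–Perthame–Souganidis half-relaxed-limits argument, extraction of a convergent subsequence of $q_n=1/(1+(1/\epsilon)^{x_n/\delta_n})$, and then bounding each branch of the rescaled DPP by the corresponding one-sided Hamiltonian $H_1$ or $H_2$ to obtain the Ishii supersolution inequality at the interface. The only stylistic difference is that you bound both branches term-wise up front (using $q\in[0,1]$) while the paper argues case-by-case depending on which branch achieves the max; these are equivalent.
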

\begin{proof}
The proof is almost the same as \cite[Theorem 2.1]{MR1115933}, and we record here for completeness. Fixing arbitrary $T>0$, we show that $\underline{v}$ is a viscosity supersolution over $[0,T] \times \mathbb{R}$.  Assume that $(t_0,x_0)$ is a strict local minimum of $\underline{v}-\phi$ for some $\phi \in \mathcal{C}_b^{\infty}([0,T]\times \mathbb{R})$. As a result of \eqref{eq:sdpp}, it can be easily seen that $\underline{v}(t,x) \in [-t,0]$. Without loss of generality, we assume that $t_0 \in (0,T), \underline{v}(t_0,x_0)=\phi(t_0,x_0)$, and there exists some $r>0$ such that 
\begin{enumerate}[(i)]
\item  $\phi \leq -2T$  outside the ball  $B((t_0,x_0),r):=\{(t,x): (t-t_0)^2+(x-x_0)^2 \leq r^2 \}$, 
\item $\underline{v}-\phi \geq 0= \underline(t_0,x_0)-\phi(t_0,x_0)$ in the ball $B((t_0,x_0),r)$. 
\end{enumerate}
Then there exists a sequence of $(t_n, x_n, \delta_n)$ such that $(t_n,x_n,\delta_n) \to (t_0,x_0,0)$ and $(t_n,x_n)$ is a global minimum of $\tilde{V}^{\delta_n}-\phi$. Due to the definition of $\underline{v}$, we have that $\xi_n:=\tilde{V}^{\delta_n}(t_n,x_n)-\phi(t_n,x_n) \to 0$ and $\tilde{V}^{\delta_n}(t,x) \leq \phi(t,x)+\xi_n$ for any $(t,x) \in [0,T] \times \mathbb{R}$.

According to \eqref{eq:sdpp}, we obtain that 
\begin{align}\label{eq:limit}
0 \leq    \phi(t_n,x_n)+& \max \left\{   \delta_n\left(1-\mu+\frac{\mu}{1+(1/\epsilon)^{x_n/\delta_n}} \right)  -\mu \phi(t_n-\delta_n, x_n+\delta_n) -(1-\mu) \phi(t_n-\delta_n,x_n), \right.\notag  \\
     &  \left.  \delta_n(1-\mu)\left(1-\frac{1}{1+(1/\epsilon)^{x_n/\delta_n}}\right)- (1-\mu) \phi\left(t_n -\delta_n, x_n-\delta_n ) \right)-\mu \phi(t_n-\delta_n,x_x) \right\}. 
\end{align} 
We prove for the case $x_0=0$, and the proof for $x \not =0$ is the same. Since $\left\{\frac{1}{1+(1/\epsilon)^{x_n/\delta_n}}\right\}_{n \geq 0} \in [0,1]$, we can take a convergent subsequence. For simplicity, we still denote it by $\left\{\frac{1}{1+(1/\epsilon)^{x_n/\delta_n}}\right\}_{n \geq 0}$, and assume it converges to some $s \in [0,1]$. Letting $n \to \infty$ in \eqref{eq:limit}, we obtain that 
\begin{align*}
0 \leq \phi_t(t_0,x_0)+ \max\left\{ 1-\mu+\mu s - \mu \phi_x(t_0,x_0), (1-\mu)(1-s)+(1-\mu)\phi_x(t_0,x_0)\right\}.
\end{align*}
Note that if $$1-\mu+\mu s - \mu \phi_x(t_0,x_0) \geq (1-\mu)(1-s)+(1-\mu)\phi_x(t_0,x_0),$$ then we have 
\begin{align*}
H_2(x_0, \phi_x(t_0,x_0)) \geq 1- \mu\phi_x(t_0,x_0) \geq 1-\mu+\mu s - \mu \phi_x(t_0,x_0),
\end{align*}
and hence $$\phi_t(t_0,x_0)+H_2(x_0, \phi_x(t_0,x_0) \geq 0.$$ Similarly if $$(1-\mu)(1-s)+(1-\mu)\phi_x(t_0,x_0) \geq 1-\mu+\mu s - \mu \phi_x(t_0,x_0),$$ then 
\begin{align*}
H_1(x_0, \phi_x(t_0,x_0))\geq 1-\mu+(1-\mu)\phi_x(t_0,x_0) \geq (1-\mu)(1-s)+(1-\mu)\phi_x(t_0,x_0),
\end{align*}
and hence $$\phi_t(t_0,x_0)+H_1(x_0, \phi_x(t_0,x_0) \geq 0.$$
Therefore, we have shown that $$\max\{\phi_t(t_0,x_0)+H_1(x_0,\phi_x(t_0,x_0)), \phi_t(t_0,x_0)+H_2(x_0,\phi_x(t_0,x_0)) \} \geq 0.$$
\end{proof}

\section{Lower Bound on the Value function}\label{section:lower}
It was proved in \cite{2020arXiv200100543R} that the asymptotic average value is $(1-\mu)$ for any offline strategy of the malicious expert if starting with weight $p_1^1=1/2$. Here we provide a lower bound on the value functions for the corresponding online problem 
\begin{align}\label{eq:lower}
\liminf\limits_{N \to \infty} \frac{V(N,\rho)}{N}\geq 1-\mu+\mu(1-\mu)(\rho-g(\rho))>1-\mu,
\end{align}
which shows that the malicious expert has more advantages when he adopts online policies.

This lower bound can be achieved if the malicious expert chooses to lie at state $\rho$ and chooses to tell the truth at state $g(\rho)$. For $p_1^1=\rho$, define the corresponding strategies by 
\begin{equation}
\alpha^{\rho}_t (\mathcal{G}_t)=
\begin{cases}
L  & \text{if } p_t^1= \rho, \\
T  & \text{if } p_t^1=g(\rho),
\end{cases}
\end{equation}
and $\alpha^{\rho}:=(\alpha_1^{\rho}, \alpha_2^{\rho}, \dotso)$. We denote the value function associated with $\alpha^{\rho}$ by 
\begin{align*}
V^{\alpha^{\rho}}(N,\rho)=\mathbb{E}^{\alpha^{\rho}}\left[\sum_{t=1}^N l(\hat{y}_t, y_t) \left| \ p_1^1=\rho \right. \right].
\end{align*}

\begin{prop}
\begin{align*}
\lim\limits_{N \to \infty} \frac{ V^{\alpha^{\rho}}(N,\rho)}{N}= 1-\mu+\mu(1-\mu)(\rho-g(\rho)).
\end{align*}
\end{prop}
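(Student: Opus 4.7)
The plan is to recognize that under the strategy $\alpha^{\rho}$, the weight process $\{p_t^1\}_{t \geq 1}$ is a two-state Markov chain on $\{\rho, g(\rho)\}$ and then to compute a stationary average. Indeed, by \eqref{eq:transit}, starting at $\rho$ (where we lie), we transition to $g(\rho)$ with probability $\mu$ (honest expert correct) and stay at $\rho$ with probability $1-\mu$; starting at $g(\rho)$ (where we tell the truth), we transition to $\rho$ with probability $1-\mu$ (honest expert wrong, applying $g^{(-1)}$) and stay at $g(\rho)$ with probability $\mu$. In particular, the transition matrix
\[
P=\begin{pmatrix} 1-\mu & \mu \\ 1-\mu & \mu \end{pmatrix}
\]
has two identical rows, so $\mathbb{P}^{\alpha^{\rho}}(p_t^1=\rho)=1-\mu$ and $\mathbb{P}^{\alpha^{\rho}}(p_t^1=g(\rho))=\mu$ for every $t \geq 2$, regardless of the initial distribution.

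Next, I would use \eqref{eq:currentloss} to compute the expected per-round loss in each state. At $\rho$, since $\alpha_t=L$, the conditional expected loss equals $1-\mu+\mu\rho$; at $g(\rho)$, since $\alpha_t=T$, it equals $(1-\mu)(1-g(\rho))$. Therefore, for all $t \geq 2$,
\[
\mathbb{E}^{\alpha^{\rho}}[l(\hat{y}_t,y_t)]=(1-\mu)\bigl(1-\mu+\mu\rho\bigr)+\mu(1-\mu)\bigl(1-g(\rho)\bigr).
\]
A straightforward rearrangement shows this equals $1-\mu+\mu(1-\mu)(\rho-g(\rho))$.

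Finally, I would sum from $t=1$ to $N$. The $t=1$ term is a bounded quantity not depending on $N$, and every other term equals the stationary value computed above, so
\[
V^{\alpha^{\rho}}(N,\rho)=(N-1)\bigl[1-\mu+\mu(1-\mu)(\rho-g(\rho))\bigr]+O(1),
\]
and dividing by $N$ and sending $N \to \infty$ gives the claimed limit. There is no real obstacle here; the only point requiring care is the observation that both rows of the transition matrix coincide, which makes the stationary distribution reached after a single step and removes any need for a mixing-time or law-of-large-numbers argument.
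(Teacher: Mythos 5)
Your proof is correct and follows essentially the same approach as the paper: recognize the two-state Markov chain structure of $\{p_t^1\}$, identify the stationary distribution $(1-\mu,\mu)$, and average the per-round expected losses from \eqref{eq:currentloss}. Your observation that the transition matrix has identical rows (so the stationary distribution is attained exactly from $t\geq 2$) is a nice simplification that dispenses with the paper's appeal to a Markov-chain convergence theorem, but the underlying strategy is identical.
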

\begin{proof}
Under strategy $\alpha^{\rho}$, $\{p^1_t\}_{t \in \mathbb{N}}$ is a Markov chain with two states $\{\rho, g(\rho)\}$ starting with $p_0^1=\rho$, and its transition probability is given by 
\begin{align*}
&\mathbb{P} \left[p_{t+1}^1=\rho \ | \ p_t^1=\rho\right]=1-\mu, \quad \quad  \mathbb{P} \left[p_{t+1}^1=g(\rho) \ | \ p_t^1=\rho\right]=\mu, \\
&\mathbb{P} \left[p_{t+1}^1=\rho \ | \ p_t^1=g(\rho)\right]=1-\mu, \quad  \mathbb{P} \left[p_{t+1}^1=g(\rho) \ | \ p_t^1=g(\rho)\right]=\mu.
\end{align*}
Denote its distribution at time $t$ by 
\begin{align*}
\pi_t:=\left( \mathbb{P}(p_t^1= \rho), \mathbb{P}(p_t^1=g(\rho))\right). 
\end{align*}
It can be easily seen that $(1-\mu, \mu)$ is the stationary distribution of $\{p_t^1\}_{t \in \mathbb{N}}$. According to \cite[Theorem 4.9]{MR3726904}, the distribution $\pi_t$ converges to $(1-\mu, \mu)$ as $t \to \infty$. 
Due to the equality
\begin{align*}
\mathbb{E}^{\alpha^{\rho}}\left[\sum_{t=0}^N l(\hat{y}_t, y_t) \left| \ p_0^1=\rho \right.\right]=\sum_{t=0}^N  \mathbb{P}(p_t^1=\rho)(1-\mu+\mu \rho) + \sum_{t=0}^N \mathbb{P}(p_t^1= g(\rho)) (1-\mu)(1-g(\rho)),
\end{align*}
it can be easily verified that 
\begin{align*}
 \lim\limits_{N \to \infty} \frac{ V^{\alpha^{\rho}}(N,\rho)}{N}&=(1-\mu)(1-\mu+\mu\rho)+\mu(1-\mu)(1-g(\rho)) \\
 &=1-\mu+\mu(1-\mu)(\rho-g(\rho)) \geq 1- \mu. 
\end{align*}

\end{proof}

\section{asymptotically optimal strategy for the forecaster}\label{section:adaptive}
In this section, we show that an adaptive multiplicative weightsed algorithm can resist corruptions of the malicious expert. Different from the multiplicative weights algorithm in Section~\ref{section:Problem}, the adaptive multiplicative weightsed algorithm updates the weights $p_t^i, i=1,2$, as follows: 
\begin{align*}
p_{t+1}^i= \frac{p_t^i e^{-\eta_t|x^i_t-y_t|}}{p_t^1 e^{-\eta_t|x^1_t-y_t|}+p_t^2 e^{-\eta_t|x^2_t-y_t|}},
\end{align*}
where $\eta_t =\sqrt{8 (\ln 2)/t }, t \in \mathbb{N}_+$ is time-varying. Denote by $V^*(N,\rho)$ the value function for the malicious expert under the adaptive multiplicative weightsed algorithm. Define 
$$g_t(p_t^1)=\frac{1}{1+(1/p_t^1-1)e^{\eta_t}}, \quad g_t^{(-1)}(p_t^1)=\frac{1}{1+(1/p_t^1-1)e^{-\eta_t}}.$$ 
It can be easily verified that $V^*(N,\rho)$ is the solution to dynamic programming equations
\begin{align*}
V^*(t+1,\rho) =& \max\{(1-\mu+\mu\rho)+\mu V^*(t, g_t(\rho))+(1-\mu)V^*(t,\rho),  \\
& (1-\mu)(1-\rho)+(1-\mu) V^*\left(t, g_t^{(-1)}(\rho)\right)+\mu V^*(t,\rho)       \}, \notag
\end{align*}
together with initial conditions $V^*(0,\rho)=0$. 
\begin{prop}\label{prop:asym}
\begin{align}
\lim\limits_{N \to \infty} \frac{V^*(N,1/2)}{N}= 1-\mu,
\end{align}
which implies that this adaptive multiplicative weights algorithm is asymptotically optimal for the forecaster. 
\end{prop}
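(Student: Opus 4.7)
The plan is to establish matching $O(\sqrt{N})$ bounds, which after dividing by $N$ yield the claim.

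\textbf{Lower bound.} Take the malicious strategy of always lying, $\alpha_t \equiv L$. By the first case of \eqref{eq:currentloss}, the expected per-round gain is $(1-\mu) + \mu p_t^1 \geq 1-\mu$ regardless of how the weight process $\{p_t^1\}$ evolves under the adaptive update. Summing over $t$ yields $V^*(N,1/2) \geq N(1-\mu)$, so $\liminf_{N\to\infty} V^*(N,1/2)/N \geq 1-\mu$.

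\textbf{Upper bound.} The crucial observation is that the forecaster's per-round loss is exactly the standard Hedge loss,
\begin{equation*}
l(\hat{y}_t, y_t) \;=\; p_t^1 |x_t^1 - y_t| + p_t^2 |x_t^2 - y_t|,
\end{equation*}
and the update rule of Section~\ref{section:adaptive} with $\eta_t = \sqrt{8 \ln 2 / t}$ is precisely the textbook exponentially weighted average forecaster with time-varying learning rate. The classical Cesa-Bianchi--Lugosi regret bound therefore applies pathwise: for any outcome sequence $\{y_t\}$ and any expert predictions $\{x_t^i\}$ (and in particular under any adaptive malicious strategy), there is an absolute constant $C$ with
\begin{equation*}
\sum_{t=1}^N l(\hat{y}_t, y_t) \;\leq\; \min_{i \in \{1,2\}} \sum_{t=1}^N |x_t^i - y_t| + C\sqrt{N}.
\end{equation*}
Bounding the minimum by the choice $i=2$ and then taking expectations under an arbitrary malicious policy $\alpha$, one uses that expert $2$'s errors are i.i.d.\ Bernoulli$(1-\mu)$ independent of $\mathcal{G}_t$, so that $\mathbb{E}^\alpha\!\left[\sum_{t=1}^N |x_t^2 - y_t|\right] = N(1-\mu)$ for every $\alpha$. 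Thus $V^*(N,1/2) \leq N(1-\mu) + C\sqrt{N}$, giving $\limsup_{N\to\infty} V^*(N,1/2)/N \leq 1-\mu$, and combining with the lower bound proves Proposition~\ref{prop:asym}.

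\textbf{Main obstacle.} There is no serious analytic difficulty; the only nontrivial input is the Hedge regret bound with time-varying learning rate. For a fully self-contained proof one would run the usual potential-function argument on $\Phi_t = \ln\!\bigl(\sum_i p_1^i \exp(-\sum_{s<t}\eta_s |x_s^i - y_s|)\bigr)$, apply Hoeffding's lemma to control $\Phi_{t+1} - \Phi_t$, and use Abel summation to handle the variation of $\eta_t$; this yields $C = \sqrt{(\ln 2)/2}$ plus a lower-order correction. Once the regret bound is established, every remaining step is routine.
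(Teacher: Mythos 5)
Your proof is correct and follows essentially the same route as the paper: the lower bound from the always-lie strategy and \eqref{eq:currentloss}, and the upper bound from the Cesa-Bianchi--Lugosi regret guarantee for the exponentially weighted forecaster with $\eta_t = \sqrt{8\ln 2 / t}$, bounding by expert $2$'s loss and taking expectations to get $N(1-\mu) + O(\sqrt{N})$. The paper cites this regret bound directly ([Chapter 2, Theorem 2.3] of \cite{MR2409394}) rather than sketching its potential-function proof, but the substance is identical.
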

\begin{proof}
Suppose the malicious expert keeps lying, i.e. taking strategies $\alpha_t(\mathcal{G}_t)=L, t \in \mathbb{N}_+$. Then according to \eqref{eq:currentloss}, it can be easily seen that the cumulative loss under this strategy is greater than or equal to $(1-\mu)N$, and hence
\begin{align*}
\liminf\limits_{N \to \infty} \frac{V^*(N,1/2)}{N} \geq 1-\mu.  
\end{align*}

To prove the other inequality, for any path $\mathcal{G}_{N+1}$ with $p_1^1=p_1^2 =1/2$, we define 
\begin{align*}
\hat{L}_N:= \sum_{t=1}^N l(\hat{y}_t, y_t), \quad  L^i_N:=\sum_{t=1}^N l(x^i_t, y_t), i=1,2. 
\end{align*}
Applying \cite[Chapter 2, Theorem 2.3]{MR2409394}, we obtain that 
\begin{align*}
\hat{L}_N- \min_{i=1,2} L^i_N \leq 2 \sqrt{\frac{N}{2}\ln 2}+\sqrt{\frac{\ln 2 }{8} },
\end{align*}
and hence
$$\hat{L}_N \leq  L^2_N +2 \sqrt{\frac{N}{2}\ln 2}+\sqrt{\frac{\ln 2 }{8} }.$$
Therefore for any strategy $\alpha$, we obtain
\begin{align*}
\mathbb{E}^{\alpha} \left[\hat{L}_N \left| p_1^1=1/2 \right. \right] &\leq \mathbb{E}^{\alpha} \left[L^2_N \left| p_1^1=1/2 \right. \right] +2 \sqrt{\frac{N}{2}\ln 2}+\sqrt{\frac{\ln 2 }{8} } \\
&= (1-\mu)N+2 \sqrt{\frac{N}{2}\ln 2}+\sqrt{\frac{\ln 2 }{8} },
\end{align*}
and also 
\begin{align*}
\limsup\limits_{N \to \infty} \frac{V^*(N,1/2)}{N} \leq 1-\mu. 
\end{align*}
\end{proof}

\section{Conclusions}\label{section:conclusions}
In this paper, we studied an online prediction problem with two experts of whom one is malicious. At each round, based on all the prior history, the malicious expert chooses to tell the true outcome or not so as to maximize the loss. We showed that the multiplicative weights algorithm cannot resist the corruption of the malicious expert by explicitly finding upper and lower bounds on the value function; see \eqref{eq:upper} and \eqref{eq:lower}. We also proved that an adaptive multiplicative weights algorithm can resist the corruption; see Proposition~\ref{prop:asym}. 
\bibliographystyle{siam}
\bibliography{ref}

\begin{thebibliography}{10}

\bibitem{MR3092359}
{\sc G.~Barles, A.~Briani, and E.~Chasseigne}, {\em A {B}ellman approach for
  two-domains optimal control problems in {$\Bbb R^N$}}, ESAIM Control Optim.
  Calc. Var., 19 (2013), pp.~710--739.

\bibitem{MR3206981}
\leavevmode\vrule height 2pt depth -1.6pt width 23pt, {\em A {B}ellman approach
  for regional optimal control problems in {$\Bbb{R}^N$}}, SIAM J. Control
  Optim., 52 (2014), pp.~1712--1744.

\bibitem{MR1115933}
{\sc G.~Barles and P.~E. Souganidis}, {\em Convergence of approximation schemes
  for fully nonlinear second order equations}, Asymptotic Anal., 4 (1991),
  pp.~271--283.

\bibitem{doi:10.1080/03605302.2020.1712418}
{\sc E.~Bayraktar, I.~Ekren, and X.~Zhang}, {\em Finite-time 4-expert
  prediction problem}, Communications in Partial Differential Equations,
  (2020).

\bibitem{2019arXiv190202368B}
{\sc E.~{Bayraktar}, I.~{Ekren}, and Y.~{Zhang}}, {\em {On the asymptotic
  optimality of the comb strategy for prediction with expert advice}}, To
  appear in Annals of Applied Probability,  (2020).

\bibitem{MR2409394}
{\sc N.~Cesa-Bianchi and G.~Lugosi}, {\em Prediction, learning, and games},
  Cambridge University Press, Cambridge, 2006.

\bibitem{MR3214784}
{\sc S.~de~Rooij, T.~van Erven, P.~D. Gr\"{u}nwald, and W.~M. Koolen}, {\em
  Follow the leader if you can, hedge if you must}, J. Mach. Learn. Res., 15
  (2014), pp.~1281--1316.

\bibitem{Drenska2019}
{\sc N.~Drenska and R.~V. Kohn}, {\em Prediction with expert advice: A pde
  perspective}, Journal of Nonlinear Science,  (2019).

\bibitem{MR3478415}
{\sc N.~Gravin, Y.~Peres, and B.~Sivan}, {\em Towards optimal algorithms for
  prediction with expert advice}, in Proceedings of the {T}wenty-{S}eventh
  {A}nnual {ACM}-{SIAM} {S}ymposium on {D}iscrete {A}lgorithms, ACM, New York,
  2016, pp.~528--547.

\bibitem{10.5555/3157382.3157596}
{\sc W.~M. Koolen, P.~Gr\"{u}nwald, and T.~van Erven}, {\em Combining
  adversarial guarantees and stochastic fast rates in online learning}, in
  Proceedings of the 30th International Conference on Neural Information
  Processing Systems, NIPS'16, Red Hook, NY, USA, 2016, Curran Associates Inc.,
  pp.~4464--4472.

\bibitem{MR3726904}
{\sc D.~A. Levin and Y.~Peres}, {\em Markov chains and mixing times}, American
  Mathematical Society, Providence, RI, 2017.
\newblock Second edition, With contributions by Elizabeth L. Wilmer, With a
  chapter on ``Coupling from the past'' by James G. Propp and David B. Wilson.

\bibitem{MR1265851}
{\sc N.~Littlestone and M.~K. Warmuth}, {\em The weighted majority algorithm},
  Inform. and Comput., 108 (1994), pp.~212--261.

\bibitem{MR3960937}
{\sc J.~Mourtada and S.~Ga\"{\i}ffas}, {\em On the optimality of the hedge
  algorithm in the stochastic regime}, J. Mach. Learn. Res., 20 (2019),
  pp.~Paper No. 83, 28.

\bibitem{2020arXiv200100543R}
{\sc S.~{Rasoul Etesami}, N.~{Kiyavash}, and H.~V. {Poor}}, {\em {Adversarial
  Policies in Learning Systems with Malicious Experts}}, arXiv:2001.00543,
  (2020).

\end{thebibliography}
\end{document}